\documentclass[10pt,a4paper,twoside]{amsart}

\usepackage{amsmath,amssymb,url}

\numberwithin{equation}{section}

\theoremstyle{plain}
\newtheorem{theorem}{Theorem}[section]
\newtheorem{lemma}[theorem]{Lemma}

\newtheorem{corollary}[theorem]{Corollary}

\theoremstyle{definition}
\newtheorem{definition}[theorem]{Definition}

\newtheorem{example}[theorem]{Example}


\begin{document}


\title[Algebras  with Medial-like Functional Equations on Quasigroups]{Algebras  with Medial-like Functional Equations on Quasigroups}

\author[A. Ehsani]{Amir Ehsani}
\address{Department of Mathematics, Mahshahr Branch, Islamic Azad University, Mahshahr, Iran.}
\email{a.ehsani@mhriau.ac.ir}
\author[A. Krape\v{z}]{Aleksandar Krape\v{z}}
\address{Mathematical Institute of the Serbian Academy of Sciences and Arts, Knez Mihailova 36, 11001 Belgrade, Serbia.}
\email{sasa@mi.sanu.ac.rs}
\author[Yu. M. Movsisyan]{Yuri Movsisyan}
\address{Department of Mathematics and Mechanics,  Yerevan State University,  Alex Manoogian 1, Yerevan 0025,
Armenia.}
\email{yurimovsisyan@yahoo.com
 }

\subjclass[2010]{20N05; 39B52; 08A05.}

\keywords{Medial equation, Paramedial equation, Balanced equation, Beluosov equation, Quasigroup operation, Pair operation, Hyperidentity.}

\maketitle

\begin{abstract}
We consider $14$  medial-like balanced  functional equations with four object variables for a pair
$(f, g)$ of binary quasigroup operations. Then, we prove that every algebra $(B; f, g)$ with quasigroup operations satisfying a medial-like balanced functional equation has a linear representation on an abelian group $(B; +)$.
\end{abstract}
\section{Introduction}

A binary algebra $\textbf{B}$ is an ordered pair $(B; F)$, where
$B$ is a nonempty set and $F$ is a family of binary operations
$f:B^2\rightarrow B$. The set $B$ is called the universe (base,
underlying set) of the algebra $\textbf{B}=(B; F)$. If $F$ is
finite, say $F=\{f_1,\ldots,f_k\}$, we often write $(B;
f_1,\ldots, f_k)$ for $(B; F)$, by \cite{28}. The algebra \textbf{B} is a
groupoid if it has only one binary operation.

A binary quasigroup is usually defined to be a groupoid
$(B; f)$ such that for any $a,b\in B$ there are unique solutions
$x$ and $y$ to the following equations:
 \[f(a,x)=b \ \ \ \text{and}\ \ \ f(y,a)=b, \]
by  \cite{pfl}.
If $(B; f)$ is quasigroup we say that $f$ is a quasigroup operation.
A loop is a quasigroup with unit $(e)$ such that
\[f(e,x)=f(x,e)=x.\] Groups are associative quasigroups, i.e. they
satisfy:
\[f(f(x,y),z)=f(x,f(y,z))\] and they necessarily contain a unit.
A quasigrouup is commutative if 
\begin{equation}\label{comm}
f(x,y)=f(y,x).
\end{equation}
Commutative groups are known as abelian groups.

A triple $(\alpha,\beta,\gamma)$ of bijections from a set $B$ onto
a set $C$ is called an isotopy of a groupoid $(B; f)$ onto a
groupoid $(C; g)$ provided \[\gamma f(x, y)=g(\alpha x,\beta
y)\] for all $x,y\in B$. 
$(C; g)$ is then called an isotope of
$(B; f)$, and groupoids $(B; f)$ and $(C; g)$ are
called isotopic to each other. An isotopy of $(B; f)$ onto
$(B; f)$ is called an autotopy of $(B; f)$.
Let $\alpha$ and $\beta$ be permutations of $B$ and let $\iota$ denote
the identity map on $B$. Then $(\alpha,\beta,\iota)$ is a
principal isotopy of a groupoid $(B; f)$ onto a groupoid
$(B; g)$ means that $(\alpha,\beta,\iota)$ is an isotopy of
$(B; f)$ onto  $(B; g)$.
Isotopy is a generalization of isomorphism. Isotopic image of a
quasigroup is again a quasigroup. A loop isotopic to a group is isomorphic to it.
Every quasigroup is isotopic to some loop i.e., it is a loop isotope.

A binary quasigroup $(B;f)$ is linear over an abelian group if
\[f(x,y)=\varphi x+a+\psi y,\] where $(B;+)$ is an abelian group, $\varphi$
and $\psi$ are automorphisms of  $(B;+)$ and $a\in B$ is
a fixed element.
  Quasigroup linear over an abelian group is also called a
 $T$-quasigroup.

Quasigroups are important algebraic (combinatorial, geometric) structures which
arise in various areas of mathematics and other disciplines. We mention just a few
of their applications: in combinatorics (as latin squares, see \cite{Denes}), in geometry (as nets/webs, see \cite{Belousov}), in statistics (see \cite{Fisher}), in special theory of relativity (see \cite{Ungar}), in coding theory and cryptography (\cite{Shcher}).

\section{Functional equations on quasigroups}

We use (object) variables $x, y, u, v, z$ and operation symbols (i.e. functional variables) $f, g$. We assume that all operation symbols represent quasigroup operations. 
A functional equation is an equality s = t, where s and t are terms with symbols of unknown operations occurring in at least one of them.

\begin{definition}
Functional equation $s=t$ is \textit{balanced} if every (object) variable appears exactly once in $s$ and once in $t$. 

\end{definition}

 \begin{example}
The following are various functional equations:
\begin{align}
&\label{ass}f(f(x,y),z)=f(x,f(y,z)), \\
&\label{med}f(f(x,y),f(u,v))=f(f(x,u),f(y,v)), \\
&\label{pse}f(f(x,y),f(u,v))=f(f(f(x,u),y),v), \\
&\label{dis}f(x,f(y,z))=f(f(x,y),f(x,z)), \\
&\label{tra}f(f(x,y),f(y,z))=f(x,z), \\
&\label{ide}f(x,x)=x. 
\end{align}
Associativity ($Eq. \eqref{ass}$), mediality ($Eq. \eqref{med}$) and pseudomediality ($Eq. \eqref{pse}$) are balanced,
transitivity ($Eq. \eqref{tra}$),  left distributivity ($Eq. \eqref{dis}$)
and idempotency ($Eq. \eqref{ide}$) are not.
 \end{example}


We can define $4!=24$ balanced functional equations with four object variables on a quasigroup $(B; f)$:
\begin{align}
\label{1-0}f(f(x,y),f(u,v))=f(f(x,y),f(u,v))\\
\label{1-00}f(f(x,y),f(u,v))=f(f(x,y),f(v,u))\\
\label{1-1}f(f(x,y),f(u,v))=f(f(x,u),f(y,v))\\
\label{1-2}f(f(x,y),f(u,v))=f(f(x,u),f(v,y))\\
\label{1-3}f(f(x,y),f(u,v))=f(f(x,v),f(y,u))\\
\label{1-4}f(f(x,y),f(u,v))=f(f(x,v),f(u,y))\\
\label{1-05}f(f(x,y),f(u,v))=f(f(y,x),f(u,v))\\
\label{1-06}f(f(x,y),f(u,v))=f(f(y,x),f(v,u))\\
\label{1-5}f(f(x,y),f(u,v))=f(f(y,u),f(x,v))\\
\label{1-6}f(f(x,y),f(u,v))=f(f(y,u),f(v,x))\\
\label{1-7}f(f(x,y),f(u,v))=f(f(y,v),f(x,u))\\
\label{1-8}f(f(x,y),f(u,v))=f(f(y,v),f(u,x))\\
\label{1-9}f(f(x,y),f(u,v))=f(f(u,x),f(y,v))\\
\label{1-10}f(f(x,y),f(u,v))=f(f(u,x),f(v,y))
\end{align}
\begin{align}
\label{1-11}f(f(x,y),f(u,v))=f(f(u,y),f(x,v))\\
\label{1-12}f(f(x,y),f(u,v))=f(f(u,y),f(v,x))\\
\label{1-013}f(f(x,y),f(u,v))=f(f(u,v),f(x,y))\\
\label{1-014}f(f(x,y),f(u,v))=f(f(u,v),f(y,x))\\
\label{1-13}f(f(x,y),f(u,v))=f(f(v,x),f(y,u))\\
\label{1-14}f(f(x,y),f(u,v))=f(f(v,x),f(u,y))\\
\label{1-15}f(f(x,y),f(u,v))=f(f(v,y),f(x,u))\\
\label{1-16}f(f(x,y),f(u,v))=f(f(v,y),f(u,x))\\
\label{1-015}f(f(x,y),f(u,v))=f(f(v,u),f(x,y))\\
\label{1-016}f(f(x,y),f(u,v))=f(f(v,u),f(y,x))
\end{align}

 The equation $\eqref{1-0}$ is trivial i.e., all quasigroups are solutions of this equation. The equations $\eqref{1-00}$, $\eqref{1-05}$, $\eqref{1-06}$, $\eqref{1-013}$, $\eqref{1-014}$ and $\eqref{1-015}$ are all equivalent to the equation $\eqref{comm}$; solutions are commutative quasigroups.

\begin{definition}
The functional equation $\eqref{1-1}$ is called \textit{medial identity} and every quasigroup satisfying medial identity is called \textit{medial quasigroup}.
\end{definition}

\begin{theorem}\label{quasimedial}
If $(B; f)$ is a medial quasigroup then there exists an abelian group $(B;+)$, such that
\[f(x, y)=\varphi(x)+c+\psi(y),\] where $\varphi,\psi\in Aut
(B; +)$, $\varphi\psi=\psi\varphi$ and $c\in Q$, by \cite{22}.
\end{theorem}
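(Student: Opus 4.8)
The plan is to realize $f$ as an affine map over an abelian group that I obtain as a principal isotope of $(B;f)$. First I would fix an arbitrary element $0\in B$ and, using that $f$ is a quasigroup operation, pass to the principal isotope determined by the left and right translations $L(y)=f(0,y)$ and $R(x)=f(x,0)$ (both permutations of $B$): define
\[
x+y:=f\bigl(\inv{R}{x},\inv{L}{y}\bigr).
\]
By the standard principal-isotope argument (every quasigroup is a loop isotope) $(B;+)$ is a loop whose identity is $e=f(0,0)$, and inverting the definition yields the factorization
\[
f(x,y)=R(x)+L(y)\qquad(x,y\in B).
\]
Thus the whole problem reduces to understanding the loop $(B;+)$ together with the two permutations $R,L$.

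Next I would substitute this factorization into the medial identity \eqref{1-1}, obtaining
\[
R\bigl(R(x)+L(y)\bigr)+L\bigl(R(u)+L(v)\bigr)=R\bigl(R(x)+L(u)\bigr)+L\bigl(R(y)+L(v)\bigr),
\]
an equation in the loop valid for all $x,y,u,v$. The heart of the matter is to extract three facts from this single identity: (i) $(B;+)$ is associative; (ii) $(B;+)$ is commutative, hence an abelian group; and (iii) the permutations $R,L$ are affine, i.e. $R(x)=\varphi(x)+a$ and $L(y)=\psi(y)+b$ for maps $\varphi,\psi$ fixing $e$ and constants $a,b$. These follow from well-chosen specializations: since $R(0)=L(0)=e$, setting variables equal to $0$ collapses the corresponding translation values to the identity and isolates the additivity of $R$ and of $L$ separately, while the symmetric substitution pattern (putting one variable in the "middle" position equal to $e$) forces the two induced operations to coincide. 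A clean way to package (i)--(ii) is the sub-lemma that a \emph{medial loop is an abelian group}: in a loop the medial law with $u=e$ gives associativity and with $x=v=e$ gives commutativity; so once the specializations above show that $+$ inherits mediality, the group structure is automatic.

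Finally, with $R(x)=\varphi(x)+a$ and $L(y)=\psi(y)+b$ where $\varphi,\psi\in\operatorname{Aut}(B;+)$, the factorization becomes
\[
f(x,y)=\varphi(x)+\psi(y)+c,\qquad c=a+b,
\]
where commutativity of $+$ is used to collect the two constants into one. The commutation relation $\varphi\psi=\psi\varphi$ is then read off from one further instance of the medial identity: comparing the coefficients of the cross terms in $y$ and in $u$ on the two sides forces $\varphi\psi(y)=\psi\varphi(y)$ for all $y$. This gives exactly the form asserted in Theorem~\ref{quasimedial}.

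The main obstacle I anticipate is steps (i)--(iii): squeezing associativity, commutativity, and affineness of $R,L$ all out of the \emph{same} balanced identity requires choosing the specializations in the right order, since naive substitutions tend to show only that $R$ and $L$ are "additive up to a constant" without yet establishing that the underlying loop is a group. Once the abelian group structure is in place the automorphism property and the commutation $\varphi\psi=\psi\varphi$ are routine, but the normalization constants $a,b$ must be tracked carefully or the final collection of $c=a+b$ (and the identification of $\varphi,\psi$ as identity-fixing homomorphisms) can go wrong.
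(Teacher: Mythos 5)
Your proposal is sound in outline, but it is a genuinely different proof from the one the paper uses. The paper never proves Theorem \ref{quasimedial} directly: the statement is imported from Toyoda \cite{22}, and the ``short proof'' the paper supplies at the end is the one-line specialization $f=g$ of Theorem \ref{medial} (the medial-pair theorem of \cite{nazari}), which itself rests on the Acz\'el--Belousov--Hossz\'u theorem \cite{1} on the generalized equation $A_1(A_2(x,y),A_3(u,v))=A_4(A_5(x,u),A_6(y,v))$ together with the holomorphism Lemmas \ref{hol1} and \ref{hol2}. You instead reconstruct the classical direct argument: pass to the principal loop isotope, factor $f(x,y)=R(x)+L(y)$, and squeeze the abelian group structure and the affineness of $R,L$ out of the medial law itself. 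Your intermediate claims are all true --- in particular the sub-lemma that a medial loop is an abelian group (with exactly your specializations $u=e$ for associativity and $x=v=e$ for commutativity) is correct, as is the final extraction of $\varphi\psi=\psi\varphi$ by comparing cross terms. Your route buys a self-contained, elementary proof; the paper's buys brevity, at the cost of importing heavy machinery it needs anyway for Theorem \ref{main}.

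The soft spot is exactly the step you flag. Writing $a=R(x)$, $b=L(y)$, $c=R(u)$, $d=L(v)$ and $\theta=LR^{-1}$, your substituted identity becomes $R(a+\theta b)+L(c+d)=R(a+\theta c)+L(b+d)$, and naive specializations of this do \emph{not} show that $+$ is medial; moreover ``additivity'' of $R$ and $L$ cannot even be formulated before $(B;+)$ is known to be a group, so your intended order (mediality of $+$ first, then the loop lemma) does not unfold as written. What the specializations at the identity actually produce are twisted relations: setting $a=e$, then also $c=e$, and eliminating $L(b+d)$ gives an identity of the form $P+\lambda(Q+S)=Q+\lambda(P+S)$ for all $P,Q,S$, where $\lambda$ is left division by the constant $R(e)$; further specializations show $\lambda(z)=z+k$ for a fixed $k$. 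The known hinge that closes the argument from here is to introduce the translated operation $P\oplus Q:=P+(Q+k)$: the relations above say precisely that $\oplus$ is commutative and left permutable ($P\oplus(Q\oplus S)=Q\oplus(P\oplus S)$), and a commutative, left-permutable quasigroup is associative, so $(B;\oplus)$ is an abelian group and $+$ is a translate of it. With that hinge inserted, the remainder of your plan (affineness of $R,L$, collecting the constants into $c$, and reading off $\varphi\psi=\psi\varphi$) goes through as you describe.
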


\begin{definition}
The functional equation $\eqref{1-16}$ is called \textit{paramedial identity} and every quasigroup satisfying medial identity is called \textit{paramedial quasigroup}.
\end{definition}

\begin{theorem}\label{quasipara}
 If $(B; f)$ is a paramedial quasigroup then there exists an abelian group $(B; +)$, such that
\[f(x, y)=\varphi(x)+c+\psi(y),\] where $\varphi,\psi\in Aut
(B; +)$, $\varphi\varphi=\psi\psi$ and $c\in Q$, by \cite{13}.
\end{theorem}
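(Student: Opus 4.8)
The plan is to adapt the Toyoda--Bruck--Murdoch argument underlying Theorem \ref{quasimedial}, replacing the medial rearrangement by the paramedial one and tracking how this alters the relation between the two automorphisms. First I would fix an arbitrary element $e\in B$ and pass to a principal isotope. Writing $R(x)=f(x,e)$ and $L(y)=f(e,y)$ for the two translations by $e$ (both bijections, since $f$ is a quasigroup operation), set
\[ x+y=f\bigl(R^{-1}(x),L^{-1}(y)\bigr). \]
A direct verification shows that $(B;+)$ is a loop whose neutral element is $0=f(e,e)$ and that the original operation factors through the isotopy as
\[ f(x,y)=R(x)+L(y). \]
This reduces the theorem to two tasks: (a) proving that the loop $(B;+)$ is an abelian group; and (b) showing that $\varphi(x)=R(x)-R(0)$ and $\psi(y)=L(y)-L(0)$ are automorphisms of $(B;+)$. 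Granting these, one sets $c=R(0)+L(0)$ and obtains $f(x,y)=\varphi(x)+c+\psi(y)$, after which only the relation $\varphi\varphi=\psi\psi$ remains to be checked.

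Task (a) is the step I expect to be the main obstacle, since both associativity and commutativity of $+$ must be extracted from the single identity \eqref{1-16}. The device is to specialize that identity along the fixed element $e$. Putting $x=y=e$ (and using $f(e,e)=0$) gives
\[ f\bigl(0,f(u,v)\bigr)=f\bigl(R(v),R(u)\bigr), \]
and putting $u=v=e$ gives
\[ f\bigl(f(x,y),0\bigr)=f\bigl(L(y),L(x)\bigr). \]
In the medial case the corresponding specializations yield genuine homomorphism relations $f(0,f(u,v))=f(L(u),L(v))$ and $f(f(x,y),0)=f(R(x),R(y))$; here the interchange of the two \emph{outer} variables $x\leftrightarrow v$ in \eqref{1-16} forces instead the reversed, $R$-for-$L$ versions displayed above. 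Translating these relations through $f(x,y)=R(x)+L(y)$ and substituting back into \eqref{1-16}, I would first derive commutativity $x+y=y+x$ and then associativity $(x+y)+z=x+(y+z)$. The group-theoretic bookkeeping runs parallel to the medial case, the reversal above being precisely what will later produce $\varphi\varphi=\psi\psi$ rather than the commuting relation $\varphi\psi=\psi\varphi$.

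Once $(B;+)$ is known to be an abelian group, task (b) is routine: the two relations above exhibit $R$ and $L$ as affine maps, so their linear parts $\varphi$ and $\psi$ are endomorphisms, while bijectivity is inherited from $R$ and $L$; hence $\varphi,\psi\in\mathrm{Aut}(B;+)$. Finally I would substitute $f(x,y)=\varphi(x)+c+\psi(y)$ directly into \eqref{1-16}. After cancelling the constants together with the common mixed terms $\varphi\psi(y)$ and $\psi\varphi(u)$, the surviving identity is
\[ \varphi^{2}(x)+\psi^{2}(v)=\varphi^{2}(v)+\psi^{2}(x)\qquad\text{for all }x,v\in B, \]
and putting $v=0$ gives $\varphi^{2}=\psi^{2}$, that is $\varphi\varphi=\psi\psi$. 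This is the paramedial counterpart of the calculation that yields $\varphi\psi=\psi\varphi$ in Theorem \ref{quasimedial}, and it completes the proposed argument.
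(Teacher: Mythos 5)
Your proposal is correct, but it takes a genuinely different route from the paper. The paper never proves Theorem \ref{quasipara} directly: at the point of statement it is quoted from N\v{e}mec--Kepka \cite{13}, and the paper's own derivation comes only in the final Corollary, where the theorem is obtained by setting $f=g$ in Theorem \ref{param} (the linear-representation theorem for a \emph{pair} of operations satisfying \eqref{2-16}), whose proof in turn rests on the Acz\'el--Belousov--Hossz\'u theorem on generalized mediality \cite{1}, the holomorphism Lemmas \ref{hol1}--\ref{hol2}, and Albert's isotopy theorem --- the same machinery used for Theorem \ref{main}. You instead give a self-contained Toyoda-style argument: pass to the principal loop isotope $x+y=f(R^{-1}x,L^{-1}y)$ at a fixed element $e$, extract commutativity and associativity of $+$ from specializations of \eqref{1-16}, show $R$ and $L$ are affine, and finish by the direct substitution yielding $\varphi^2=\psi^2$. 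What your route buys is independence from the six-quasigroup theorem and the pair-theorem of \cite{ehsanimovsisyan}; what the paper's route buys is that, once that machinery is in place, the quasigroup case is a one-line specialization, and the same specialization simultaneously handles all the equations of Theorem \ref{main}. Your outline is sound at every point I can check: the specializations $x=y=e$ and $u=v=e$ are computed correctly, the ``reversal'' you identify ($R$-for-$L$) is indeed what replaces $\varphi\psi=\psi\varphi$ by $\varphi\varphi=\psi\psi$, and the final cancellation argument is exactly right. The one place where you are terser than the difficulty warrants is the ``bookkeeping'' establishing that the loop $(B;+)$ is an abelian group: until associativity is proved you may only use loop cancellation, not rearrangement of terms, and your two displayed specializations alone do not suffice --- one also needs the specialization $y=u=e$ (which gives $f(R(x),L(v))=f(R(v),L(x))$) to get commutativity, and a further iteration of the full translated identity to get associativity. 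These steps do go through, so this is a matter of unexecuted detail rather than a gap in the method.
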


\begin{definition}
A balanced equation $s = t$ is \textit{Belousov} if for every subterm $p$ of $s$ ($t$) there is a subterm $q$ of $t$ ($s$) such that $p$ and $q$ have exactly the same variables.
\end{definition}

 \begin{example}
Functional equations $\eqref{comm}$, $\eqref{1-016}$ and the following are Belousov equations:
\begin{eqnarray*}
&&f(x,y)=f(x,y),\\
&&f(x,f(y,z))=f(f(z,y),x).
\end{eqnarray*}
The equations $\eqref{ass}-\eqref{pse}$ are non-Belousov.
 \end{example}

By \cite{krapez2005}, we have the following results:

\begin{theorem}
The solutions of the equation $\eqref{1-016}$ belong to the variety of $4$-palindromic quasigroups.
\end{theorem}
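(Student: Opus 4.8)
The plan is to reduce the problem to the linear (Toyoda-type) representation and then read off the palindromic behaviour from the coefficients. Since \eqref{1-016} is a non-trivial balanced equation---indeed a Belousov equation---a solution $(B;f)$ is, by the general structure theory for balanced identities on quasigroups (the same linear-representation machinery underlying Theorems \ref{quasimedial} and \ref{quasipara}), principally isotopic to an abelian group. Thus I would first establish that every solution is a $T$-quasigroup, i.e.
\[ f(x,y)=\varphi(x)+c+\psi(y) \]
for some abelian group $(B;+)$, automorphisms $\varphi,\psi\in\mathrm{Aut}(B;+)$ and a fixed $c\in B$.

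Next I substitute this form into \eqref{1-016}. Expanding the left-hand side gives the coefficient $\varphi^2$ on $x$, $\varphi\psi$ on $y$, $\psi\varphi$ on $u$ and $\psi^2$ on $v$, while the right-hand side (the mirror image) gives $\psi^2$ on $x$, $\psi\varphi$ on $y$, $\varphi\psi$ on $u$ and $\varphi^2$ on $v$; the additive constants agree automatically. Matching coefficients variable by variable forces exactly the two relations
\[ \varphi^2=\psi^2 \quad\text{and}\quad \varphi\psi=\psi\varphi. \]
Hence a solution of \eqref{1-016} simultaneously satisfies the automorphism condition of a medial quasigroup (Theorem \ref{quasimedial}) and that of a paramedial quasigroup (Theorem \ref{quasipara}).

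Finally I would show that these two relations place $(B;f)$ in the variety of $4$-palindromic quasigroups, i.e. that every balanced word in four variables equals its mirror image (the term obtained by reversing the two arguments at every node). In a $T$-quasigroup each variable of a depth-two balanced word receives a coefficient among $\varphi^2,\varphi\psi,\psi\varphi,\psi^2$ determined by its left/right address, and passing to the mirror image interchanges $\varphi$ and $\psi$ throughout. Because $\varphi^2=\psi^2$ and $\varphi\psi=\psi\varphi$, each such coefficient is invariant under this interchange, so every length-four palindrome identity holds and $(B;f)$ is $4$-palindromic.

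The main obstacle is the first step: proving that a solution is isotopic to an \emph{abelian} group, not merely to a group. I would treat the isotopy-to-group part by passing to a principal loop isotope and using the balanced identity to force associativity, and I would extract commutativity (hence the abelian group, and with it the automorphism property of $\varphi,\psi$) precisely from the mirror symmetry built into \eqref{1-016}. A secondary point to verify carefully is that \eqref{1-016} alone forces all length-four palindrome identities and not just the single one it asserts; the linear representation settles this uniformly, whereas a direct equational derivation would require repeated, delicate use of the quasigroup cancellation laws.
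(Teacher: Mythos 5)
Your argument fails at its first, foundational step, and the failure is not a missing lemma but a false claim. You assert that, since \eqref{1-016} is ``a non-trivial balanced equation---indeed a Belousov equation'', the general structure theory forces every solution to be a $T$-quasigroup. This inverts the Belousov dichotomy: it is precisely the balanced \emph{non}-Belousov equations that force a quasigroup to be isotopic to an (abelian) group, while Belousov equations are exactly the ones exempt from that conclusion --- which is why Theorem \ref{thkrap2007} and Theorem \ref{main} in the paper explicitly exclude them. Concretely, every commutative quasigroup satisfies \eqref{1-016}: applying \eqref{comm} to the two inner occurrences of $f$ and then to the outer one turns $f(f(v,u),f(y,x))$ into $f(f(x,y),f(u,v))$. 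Now take any commutative loop that is not associative (such loops exist already at order $5$). By the Albert theorem cited in the paper \cite{7}, a loop isotopic to a group is isomorphic to it; so this loop is not isotopic to any group, let alone an abelian one, and admits no representation $f(x,y)=\varphi(x)+c+\psi(y)$. Hence your first step is false, and the coefficient computation that follows (which is correct as a formal computation, yielding $\varphi^2=\psi^2$ and $\varphi\psi=\psi\varphi$) applies only to the linear solutions, not to all of them. Your proposed repair --- using the identity to ``force associativity'' of a loop isotope --- cannot work for the same reason: the commutative non-associative loop above is a standing counterexample.

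For comparison, the paper does not attempt any such argument: this theorem is quoted directly from \cite{krapez2005}, where the variety of $4$-palindromic quasigroups (quasigroups in which balanced terms in four variables equal their mirror images) is treated by syntactic, equational methods --- deriving the palindromic identities from \eqref{1-016} via quasigroup cancellation --- rather than by passing to a linear representation. Any correct proof must be of that nature, since the variety in question contains members that have no linear representation at all.
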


\begin{theorem}\label{thkrap2007}
The equations $\eqref{1-2}-\eqref{1-4}$, $\eqref{1-5}-\eqref{1-12}$ and $\eqref{1-13}-\eqref{1-15}$ are equivalent to commutative (para)mediality; solutions constitute the variety of commutative $T$-quasigroups (i.e., with  $\varphi=\psi$).
\end{theorem}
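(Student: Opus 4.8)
The plan is to handle all fourteen equations simultaneously through the symmetry group of the quaternary term $t(a,b,c,d)=f(f(a,b),f(c,d))$. For a quasigroup $(B;f)$ let $H$ be the set of those $\sigma\in S_4$ for which the balanced identity obtained by permuting the four argument places of $t$ by $\sigma$ holds in $(B;f)$. Since the identities valid in a fixed algebra are closed under composition and inversion of such place permutations, $H$ is a subgroup of $S_4$. In this language the medial identity \eqref{med} asserts $(2\,3)\in H$, while commutativity \eqref{comm} of the inner factors gives $(1\,2),(3\,4)\in H$ and commutativity of the outer product gives $(1\,3)(2\,4)\in H$; these three generate the dihedral group $D\le S_4$ of order $8$. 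For the sufficiency direction, if $f(x,y)=\varphi x+c+\varphi y$ over an abelian group $(B;+)$ (the case $\varphi=\psi$), then expanding $t$ yields $\varphi^2a+\varphi^2b+\varphi^2c+\varphi^2d$ plus a constant independent of the arguments; as $(B;+)$ is abelian this expression is invariant under every permutation of $a,b,c,d$, so $H=S_4$ and in particular each of the fourteen identities holds.

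For the converse, suppose $(B;f)$ satisfies one of the fourteen equations and let $\rho\in H$ be its place permutation. Because for each of these equations the right-hand arrangement cannot be obtained from $f(f(x,y),f(u,v))$ by commutativity alone, we have $\rho\notin D$. The first task is to deduce \eqref{comm}, and I would do this by inspection of the fourteen arrangements: in each case a single identification of two object variables collapses one side so that a bijective translation can be cancelled, leaving an instance of \eqref{comm}. For example, setting $u=y$ in \eqref{1-2} gives $f(f(x,y),f(y,v))=f(f(x,y),f(v,y))$, and cancelling the left translation by $f(x,y)$ yields $f(y,v)=f(v,y)$; likewise $x=u$ in \eqref{1-5} gives $f(f(x,y),f(x,v))=f(f(y,x),f(x,v))$, whence cancelling the right translation by $f(x,v)$ yields $f(x,y)=f(y,x)$; the outer-swap arrangements, such as \eqref{1-4} with $x=u$, instead produce $f(A,B)=f(B,A)$ with $A,B$ ranging freely over $B$, which is again \eqref{comm}. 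This case analysis is the main obstacle, being the only place where the fourteen equations must be treated individually rather than uniformly.

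Once \eqref{comm} is available we have $D\le H$, and together with $\rho\in H\setminus D$ this gives $D\subsetneq H\le S_4$. Since $D$ is a Sylow $2$-subgroup of $S_4$ and hence maximal, any proper overgroup is all of $S_4$, so $H=S_4$. In particular $(2\,3)\in H$, i.e.\ $(B;f)$ satisfies the medial identity \eqref{med}, so by Theorem \ref{quasimedial} it is linear: $f(x,y)=\varphi x+c+\psi y$ over an abelian group $(B;+)$ with $\varphi,\psi\in\mathrm{Aut}(B;+)$. Finally, commutativity forces $\varphi x+c+\psi y=\varphi y+c+\psi x$ for all $x,y$; taking $y$ to be the neutral element of $(B;+)$ gives $\varphi=\psi$. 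Thus each of the fourteen equations implies that $(B;f)$ is a commutative $T$-quasigroup with $\varphi=\psi$, and with the sufficiency direction this identifies the solution set with exactly that variety.
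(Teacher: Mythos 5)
Your overall architecture is sound and genuinely different from the paper's: the paper obtains this theorem by setting $f=g$ in Theorem~\ref{main} (whose proof rests on the Acz\'el--Belousov--Hossz\'u theorem of \cite{1}) and then cancelling in the resulting relation $\psi^2=\psi\varphi$, whereas you work entirely inside $S_4$, showing that commutativity places the dihedral group $D$ of order $8$ inside the symmetry group $H$ of $t(a,b,c,d)=f(f(a,b),f(c,d))$, that any of the fourteen equations supplies some $\rho\in H\setminus D$, and that $D\subsetneq H\le S_4$ forces $H=S_4$, whence mediality, Theorem~\ref{quasimedial}, and $\varphi=\psi$. However, there is a genuine gap exactly at the step you call the main obstacle. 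Your claim that for each of the fourteen equations a \emph{single} identification of two object variables yields an instance of \eqref{comm} (either by cancelling a translation or via an outer swap) is false for \eqref{1-6} and \eqref{1-13}, whose place permutations are the $4$-cycles $(1\,2\,3\,4)$ and $(1\,4\,3\,2)$. For \eqref{1-6}, $f(f(x,y),f(u,v))=f(f(y,u),f(v,x))$, run through all six identifications $x=y$, $x=u$, $x=v$, $y=u$, $y=v$, $u=v$: in no case do the two sides acquire a common outer factor, nor is one side the outer swap of the other; e.g.\ $u=x$ gives $f(f(x,y),f(x,v))=f(f(y,x),f(v,x))$, where \emph{both} inner factors appear reversed, so nothing cancels. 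The same failure occurs for \eqref{1-13}. (For the other twelve equations---the eight $3$-cycles, the transpositions $(1\,3)$, $(2\,4)$, and the $4$-cycles $(1\,2\,4\,3)$, $(1\,3\,4\,2)$---your identification argument does go through.)

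The good news is that the gap is repairable inside your own framework, and in a way that shows why the group structure of $H$ earns its keep: since $H$ is a subgroup, $\rho\in H$ implies $\rho^2\in H$, and for $\rho=(1\,2\,3\,4)$ or $(1\,4\,3\,2)$ one has $\rho^2=(1\,3)(2\,4)$, i.e.\ the identity $f(f(x,y),f(u,v))=f(f(u,v),f(x,y))$. Because $f$ is a quasigroup operation, every element of $B$ has the form $f(a,b)$, so this identity says $f(A,B)=f(B,A)$ for all $A,B\in B$, which is \eqref{comm}. With that patch the commutativity step is complete and the rest of your argument is correct, though one small point of hygiene: ``Sylow hence maximal'' is not a valid implication in general groups; what you actually need is only Lagrange---any $H$ with $D\subsetneq H\le S_4$ has order a multiple of $8$, larger than $8$ and dividing $24$, so $H=S_4$.
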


\section{Algebras with Medial-like functional equations}
As a generalization of the  functional equations $\eqref{1-0}-\eqref{1-016}$, let us consider the following balanced functional equations:
\begin{align}
\label{2-0}f(g(x,y),g(u,v))=g(f(x,y),f(u,v))\\
\label{2-00}f(g(x,y),g(u,v))=g(f(x,y),f(v,u))\\
\label{2-1}f(g(x,y),g(u,v))=g(f(x,u),f(y,v))\\
\label{2-2}f(g(x,y),g(u,v))=g(f(x,u),f(v,y))\\
\label{2-3}f(g(x,y),g(u,v))=g(f(x,v),f(y,u))\\
\label{2-4}f(g(x,y),g(u,v))=g(f(x,v),f(u,y))\\
\label{2-05}f(g(x,y),g(u,v))=g(f(y,x),f(u,v))\\
\label{2-06}f(g(x,y),g(u,v))=g(f(y,x),f(v,u))\\
\label{2-5}f(g(x,y),g(u,v))=g(f(y,u),f(x,v))\\
\label{2-6}f(g(x,y),g(u,v))=g(f(y,u),f(v,x))\\
\label{2-7}f(g(x,y),g(u,v))=g(f(y,v),f(x,u))\\
\label{2-8}f(g(x,y),g(u,v))=g(f(y,v),f(u,x))\\
\label{2-9}f(g(x,y),g(u,v))=g(f(u,x),f(y,v))\\
\label{2-10}f(g(x,y),g(u,v))=g(f(u,x),f(v,y))\\
\label{2-11}f(g(x,y),g(u,v))=g(f(u,y),f(x,v))\\
\label{2-12}f(g(x,y),g(u,v))=g(f(u,y),f(v,x))\\
\label{2-013}f(g(x,y),g(u,v))=g(f(u,v),f(x,y))\\
\label{2-014}f(g(x,y),g(u,v))=g(f(u,v),f(y,x))\\
\label{2-13}f(g(x,y),g(u,v))=g(f(v,x),f(y,u))\\
\label{2-14}f(g(x,y),g(u,v))=g(f(v,x),f(u,y))\\
\label{2-15}f(g(x,y),g(u,v))=g(f(v,y),f(x,u))\\
\label{2-16}f(g(x,y),g(u,v))=g(f(v,y),f(u,x))\\
\label{2-015}f(g(x,y),g(u,v))=g(f(v,u),f(x,y))\\
\label{2-016}f(g(x,y),g(u,v))=g(f(v,u),f(y,x))
\end{align}

\begin{definition}
A pair $(f, g)$ of binary operations is called:
\begin{enumerate}
    \item[$-$] 
\textit{medial pair of operations}, if the algebra $(B; f, g)$ satisfies the equation $\eqref{2-1}$.
    \item[$-$]
\textit{paramedial pair of operations}, if the algebra $(B; f, g)$ satisfies the equation $\eqref{2-16}$.
    \end{enumerate}
\end{definition}

\begin{definition}
A binary algebra $\textbf{B}=(B; F)$ is called:
\begin{enumerate}
    \item[$-$] 
\textit{medial algebra}, if every pair of operations of the algebra $\textbf{B}$ is medial (or, the algebra $\textbf{B}$ satisfying medial hyperidentity).
    \item[$-$] 
\textit{paramedial algebra}, if every pair of operations of the algebra $\textbf{B}$ is paramedial (or, the algebra $\textbf{B}$ satisfying paramedial hyperidentity).
    \end{enumerate}
\end{definition}

The following results are obtained in \cite{nazari} and \cite{ehsanimovsisyan} respectively.

\begin{theorem}\label{medial}
Let the set $B$, forms a quasigroup under the binary operations
$f$ and $g$. If the  pair of binary operations $(f, g)$ is
medial, then there exists a binary operation,$ +$, under which $B$
forms an abelian group and for arbitrary elements $x,y \in B$ we
have:
\[f(x,y)=\varphi_1(x)+\psi_1(y)+c_1,\]
\[g(x,y)=\varphi_2(x)+\psi_2(y)+c_2,\]
where $c_1, c_2$ are fixed elements of $B$, and $\varphi_i, \psi_i \in Aut(B;+)$ for
$i=1,2$, such that: $\varphi_1 \psi_2=\psi_2\varphi_1$, $ \varphi_2\psi_1=\psi_1\varphi_2$, $ \psi_1\psi_2=\psi_2\psi_1$ and $ \ \varphi_1\varphi_2=\varphi_2\varphi_1$.
The group $(B;+)$, is unique up to isomorphisms.
\end{theorem}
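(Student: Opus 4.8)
The plan is to follow the generalized Toyoda--Bruck scheme: first produce an abelian group as a principal loop isotope associated to the pair, then recognize both $f$ and $g$ as linear (affine) maps over this single group, and finally read off the four commutation relations by comparing coefficients in the medial-like identity \eqref{2-1}. The reason a single group must serve both operations is that \eqref{2-1} couples $f$ and $g$; decoupling this coupling is the technical heart of the argument.

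First I would fix an arbitrary element $0\in B$ and introduce the (bijective) translations $R(x)=f(x,0)$, $L(y)=f(0,y)$ of $f$ together with $P(x)=g(x,0)$, $S(y)=g(0,y)$, $\rho(w)=g(w,f(0,0))$ and $\lambda(w)=g(f(0,0),w)$ of $g$. Specializing \eqref{2-1} by setting $y=v=0$ and then $x=u=0$ yields the two ``semi-medial'' relations
\[ f(P(x),P(u))=\rho\,f(x,u),\qquad f(S(y),S(v))=\lambda\,f(y,v), \]
while setting only $v=0$ (resp. only $u=0$) expresses $g(x,y)$ inside $f$ in terms of $g$ applied to $f$-values. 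Combining these specializations with the quasigroup (cancellation) property, I would solve for $g$ as a principal isotope of $f$, i.e.\ exhibit bijections $\alpha,\beta,\gamma$ with $g(x,y)=\gamma^{-1}f(\alpha x,\beta y)$, and, substituting this back into the full identity \eqref{2-1}, reduce it to the ordinary medial identity \eqref{1-1} for $f$ alone.

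At that point Theorem \ref{quasimedial} applies to $f$: there is an abelian group $(B;+)$ with $f(x,y)=\varphi_1(x)+\psi_1(y)+c_1$ and $\varphi_1,\psi_1\in\mathrm{Aut}(B;+)$. Feeding this linear form back through the isotopy relations forces the maps $\alpha,\beta,\gamma$ to be affine with automorphism linear parts, so that $g$ is linear over the \emph{same} group, $g(x,y)=\varphi_2(x)+\psi_2(y)+c_2$ with $\varphi_2,\psi_2\in\mathrm{Aut}(B;+)$. The four commutation relations are then immediate: substituting both linear forms into \eqref{2-1} and using commutativity of $+$, the coefficient of $x$ gives $\varphi_1\varphi_2=\varphi_2\varphi_1$, that of $y$ gives $\varphi_1\psi_2=\psi_2\varphi_1$, that of $u$ gives $\varphi_2\psi_1=\psi_1\varphi_2$, and that of $v$ gives $\psi_1\psi_2=\psi_2\psi_1$, with the constant terms imposing only a (satisfiable) relation between $c_1$ and $c_2$. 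Uniqueness of $(B;+)$ up to isomorphism follows from the standard fact that all loop principal isotopes of a fixed quasigroup are isomorphic, so the group extracted from $f$ is determined independently of the chosen base point $0$.

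The main obstacle is the middle step: extracting, from the single coupled identity \eqref{2-1}, enough structure to either recognize $g$ as a principal isotope of $f$ or to force $f$ itself to be medial. Once this decoupling is achieved, everything downstream---applying Theorem \ref{quasimedial}, transporting linearity to $g$, verifying that the translation maps are genuine automorphisms rather than mere bijections, and matching coefficients---is routine. An alternative to the explicit isotopy computation would be to invoke Belousov's general theory of balanced functional equations, which guarantees that quasigroup solutions of a medial-type balanced identity are linear over a common abelian group; the argument above is the self-contained specialization of that principle to the pair $(f,g)$.
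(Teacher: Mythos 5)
Your middle step is not just the hard part of the argument---it is false, so the planned reduction to Toyoda's theorem (Theorem \ref{quasimedial}) cannot be carried out. Pair-mediality \eqref{2-1} does \emph{not} imply that $f$ is a medial quasigroup. Concretely, take any abelian group $(B;+)$ admitting two non-commuting automorphisms $\varphi,\psi$ (e.g.\ $B=Z_2\times Z_2$ with $\varphi=\varphi_2$, $\psi=\varphi_3$ from the Example following Theorem \ref{param}), and put
\[
f(x,y)=\varphi(x)+\psi(y),\qquad g(x,y)=x+y .
\]
Then
\[
f(g(x,y),g(u,v))=\varphi(x)+\varphi(y)+\psi(u)+\psi(v)=g(f(x,u),f(y,v)),
\]
so $(f,g)$ is a medial pair; moreover $g(x,y)=f(\varphi^{-1}x,\psi^{-1}y)$, so $g$ is already a principal isotope of $f$---exactly the configuration your plan aims to reach. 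Nevertheless $f$ is not medial, since for this linear $f$ mediality is equivalent to $\varphi\psi=\psi\varphi$. (The same phenomenon is visible in the statement of the theorem itself: the four commutation relations do not include $\varphi_1\psi_1=\psi_1\varphi_1$.) Consequently, substituting $g(x,y)=\gamma^{-1}f(\alpha x,\beta y)$ back into \eqref{2-1} can never yield the medial identity \eqref{1-1} for $f$ alone; what it yields is an identity in which the bijections $\alpha,\beta,\gamma$ remain, i.e.\ an instance of the generalized bisymmetry equation \eqref{t1} (indeed \eqref{2-1} is literally \eqref{t1} with $A_1=A_5=A_6=f$ and $A_2=A_3=A_4=g$). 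Solving that coupled equation is the actual content of the theorem, and it requires the Acz\'{e}l--Belousov--Hossz\'{u} theorem of \cite{1} together with the holomorphism Lemmas \ref{hol1} and \ref{hol2}---which is precisely how the paper proves the companion Theorem \ref{main}, and how \cite{nazari} proves the present statement. Your closing remark offering ``Belousov's general theory'' as an alternative is in fact the only workable route: it is the proof, not an optional shortcut around the isotopy computation.

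Two further corrections. Your uniqueness argument appeals to a ``standard fact that all loop principal isotopes of a fixed quasigroup are isomorphic''; no such fact holds---isotopic loops need not be isomorphic. What is true, and what the paper uses, is Albert's theorem that isotopic groups are isomorphic \cite{7}; it applies here because the relevant loop isotopes turn out to be abelian groups. Finally, your concluding coefficient-matching step (giving $\varphi_1\varphi_2=\varphi_2\varphi_1$, $\varphi_1\psi_2=\psi_2\varphi_1$, $\psi_1\varphi_2=\varphi_2\psi_1$, $\psi_1\psi_2=\psi_2\psi_1$) is correct as far as it goes, but it presupposes that $f$ and $g$ are linear over one and the same abelian group, which is exactly what the broken middle step was supposed to deliver.
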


\begin{theorem}\label{param}
Let the set $B$, forms a quasigroup under the binary operations
$f$ and $g$. If the  pair of binary operations $(f, g)$  is
paramedial, then there exists a binary operation,$ +$, under which
$B$ forms an abelian group and for arbitrary elements $x,y \in B$
we have:
\[f(x,y)=\varphi_1(x)+\psi_1(y)+c_1,\]
\[g(x,y)=\varphi_2(x)+\psi_2(y)+c_2,\]
where $c_1, c_2$ are fixed elements of $B$, and $\varphi_i$ and
$\psi_i$ are automorphisms on the abelian group $(B; +)$ for
$i=1,2$, such that: $\varphi_1\varphi_2=\psi_2\psi_1$, $ \varphi_2\varphi_1=\psi_1\psi_2$, $ \varphi_1\psi_2=\varphi_2\psi_1$ and $\ \psi_1\varphi_2=\psi_2\varphi_1$.
The group $(B; +)$, is unique up to isomorphisms.
\end{theorem}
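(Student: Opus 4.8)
The plan is to follow the Toyoda--Bruck--Murdoch linearization method, adapted to a pair of operations exactly as in the proof of Theorem~\ref{medial}, the only change being the bookkeeping forced by the different arrangement of variables in \eqref{2-16}. Since $f$ and $g$ are both quasigroup operations, I first fix an arbitrary element $0\in B$ and pass to a principal loop isotope built from $g$: writing $L_a(y)=g(a,y)$ and $R_a(x)=g(x,a)$ for the translations of $g$, I set $x+y:=g\bigl(R_0^{-1}(x),L_0^{-1}(y)\bigr)$, so that $(B;+)$ is a loop with neutral element $g(0,0)$ and $g$ is, by construction, an affine expression $g(x,y)=\varphi_2 x+\psi_2 y+c_2$ in this new operation for suitable bijections $\varphi_2,\psi_2$ and a constant $c_2$. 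Substituting neutral values into \eqref{2-16} and using that the translations of $f$ are bijections then forces $f$ to be affine as well, $f(x,y)=\varphi_1 x+\psi_1 y+c_1$ with bijections $\varphi_1,\psi_1$; the single-operation case is recovered by taking $f=g$, which is precisely Theorem~\ref{quasipara} and serves as a guide.

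The technical heart of the argument is to upgrade $(B;+)$ from a loop to an abelian group and the four maps $\varphi_i,\psi_i$ from bijections to automorphisms. For this I substitute the affine forms of $f$ and $g$ back into \eqref{2-16} and cancel the outer constants and translations, which is legitimate because each operation is a quasigroup. What remains is a generalized-bisymmetry identity purely in $+$; running the standard Belousov specialization argument --- repeatedly setting one or more of $x,y,u,v$ equal to $0$ and comparing the resulting instances --- yields both associativity and commutativity of $+$, so $(B;+)$ is an abelian group, and simultaneously yields the Cauchy-type additivity of each $\varphi_i,\psi_i$, so that these maps are automorphisms of $(B;+)$. This is the step I expect to be the main obstacle, since it is here that the entire content of the functional equation is converted into the group laws and the homomorphism property; the computations must be organized carefully to avoid circularity between ``$+$ is associative'' and ``the coefficient maps are additive.''

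With the affine representations $f(x,y)=\varphi_1 x+\psi_1 y+c_1$ and $g(x,y)=\varphi_2 x+\psi_2 y+c_2$ over the abelian group $(B;+)$ in hand, the commutation relations follow by a direct coefficient comparison in \eqref{2-16}. Expanding the left-hand side gives the monomials $\varphi_1\varphi_2\,x$, $\varphi_1\psi_2\,y$, $\psi_1\varphi_2\,u$, $\psi_1\psi_2\,v$, while expanding the right-hand side $g(f(v,y),f(u,x))$ gives $\psi_2\psi_1\,x$, $\varphi_2\psi_1\,y$, $\psi_2\varphi_1\,u$, $\varphi_2\varphi_1\,v$. Matching the coefficient of each object variable --- legitimate because $x,y,u,v$ range independently over $B$ and the group is abelian --- produces precisely $\varphi_1\varphi_2=\psi_2\psi_1$, $\varphi_1\psi_2=\varphi_2\psi_1$, $\psi_1\varphi_2=\psi_2\varphi_1$ and $\varphi_2\varphi_1=\psi_1\psi_2$, the four stated identities; comparing the constant terms pins down the compatibility condition relating $c_1$ and $c_2$.

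Finally, uniqueness of $(B;+)$ up to isomorphism is obtained exactly as in Theorem~\ref{medial}: any two abelian groups linearizing the same pair $(f,g)$ determine the same isotopy class for the operations, so they can differ only by the choice of neutral element, i.e.\ by a translation, and translating back supplies the required isomorphism.
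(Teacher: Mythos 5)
Your strategy is genuinely different from the paper's. The paper does not reprove this theorem (it cites \cite{ehsanimovsisyan}), but its template proof for the sibling equations (Theorem \ref{main}) --- and the cited source --- proceed by casting the equation, via the transpose trick $A^*(x,y)=A(y,x)$, into the generalized-mediality form $A_1(A_2(x,y),A_3(u,v))=A_4(A_5(x,u),A_6(y,v))$, invoking the Acz\'el--Belousov--Hossz\'u theorem \cite{1} as a black box to obtain the abelian group together with affine representations by bijections, and then upgrading those bijections to automorphism-plus-constant form via the holomorphism Lemmas \ref{hol1} and \ref{hol2}; uniqueness is the Albert/Bruck theorem \cite{7}. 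You instead propose to rebuild this machinery from scratch in the style of Toyoda's theorem. That would be legitimate if carried out, but your second paragraph --- which you yourself flag as the main obstacle --- is exactly the content of the theorem of \cite{1}, and you do not execute it: ``running the standard Belousov specialization argument'' names a technique rather than giving a proof, and in this two-operation, variable-reversing setting it amounts to several pages of nontrivial work.

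Moreover, the gap is not merely one of length; the order of steps in your plan runs into precisely the circularity you mention. Concretely: fixing $y=y_0$, $u=u_0$ in \eqref{2-16} gives $f(\gamma x,\delta v)=g(\alpha v,\beta x)$ with $\gamma x=g(x,y_0)$, $\delta v=g(u_0,v)$, $\alpha v=f(v,y_0)$, $\beta x=f(u_0,x)$, whence over your loop $f(s,t)=R_0\alpha\delta^{-1}(t)+L_0\beta\gamma^{-1}(s)$. Because the paramedial equation reverses the variable pairs, the affine form of $f$ comes out \emph{transposed}: the coefficient map attached to the first argument of $f$ sits on the right of $+$. Rewriting this as $f(s,t)=\varphi_1(s)+\psi_1(t)+c_1$ already uses commutativity of $+$, which in your plan is only to be established afterwards. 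This is exactly the point where the paper's scheme is safe: \cite{1} delivers the abelian group first, so transposed terms can be rearranged freely (the step ``as $(B;+)$ is an abelian group'' in the proof of Theorem \ref{main}), and additivity of the coefficient maps is then obtained structurally from Lemmas \ref{hol1} and \ref{hol2} rather than by Cauchy-type specializations. Your third and fourth paragraphs (the coefficient comparison, which does yield the four stated relations, and uniqueness via isotopy plus \cite{7}) are correct and agree with the paper, but they only become available after the missing step is filled --- either by genuinely carrying out the Belousov-style argument with the transposed forms tracked consistently, or by citing \cite{1} as the paper does.
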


\begin{example}
Let $B=Z_2\times Z_2$. We denote the elements of the group $B$ as
follows: \[\{(0;0), (1;0), (0;1), (1;1)\}\] and the set of all automorphisms of $B$ as follow:
\[Aut B=\{\varepsilon, \varphi_2, \varphi_3, \varphi_4, \varphi_5, \varphi_6\},\] where
$\varepsilon=\left(%
\begin{array}{cc}
  1 & 0 \\
  0 & 1 \\
\end{array}%
\right)$, $\varphi_2=\left(%
\begin{array}{cc}
  1 & 0 \\
  1 & 1 \\
\end{array}%
\right)$, $\varphi_3=\left(%
\begin{array}{cc}
  1 & 1 \\
  0 & 1 \\
\end{array}%
\right)$, $\varphi_4=\left(%
\begin{array}{cc}
  0 & 1 \\
  1 & 0 \\
\end{array}%
\right)$, 
$ \varphi_5=\left(%
\begin{array}{cc}
  1 & 1 \\
  1 & 0 \\
\end{array}%
\right)$  and $\varphi_6=\left(%
\begin{array}{cc}
  0 & 1 \\
  1 & 1 \\
\end{array}%
\right)$. 

If $\varphi=\left(%
\begin{array}{cc}
  a & b \\
  c & d \\
\end{array}%
\right)$ and $X=\left(%
\begin{array}{cc}
  x_1 & x_2 \\
\end{array}%
\right)$ then \[\varphi(X)=\left(%
\begin{array}{cc}
  x_1 & x_2 \\
\end{array}%
\right) \left(%
\begin{array}{cc}
  a & b \\
  c & d \\
\end{array}%
\right).\]
Put \[f_1(x,y)=\varphi_2(x)+\varphi_3(y)+c,\] 
\[f_2(x,y)=\varphi_3(x)+\varphi_2(y)+c,\] 
\[f_3(x,y)=\varepsilon(x)+\varphi_5(y),\] 
\[f_4(x,y)=\varepsilon(x)+\varphi_6(y),\]   
where $c\in B$, then  the algebra $(B; f_1, f_2)$ is a paramedial algebra
with quasigroup operations which is not medial and $(B; f_3, f_4)$ is a medial
algebra with quasigroup operations which is not paramedial.
\end{example}

The balanced functional equations $\eqref{2-0}$, $\eqref{2-00}$, $\eqref{2-05}$, $\eqref{2-06}$, $\eqref{2-013}$, $\eqref{2-014}$,  $\eqref{2-015}$ and $\eqref{2-016}$ are Belousov, while the balanced functional equations $\eqref{2-1}-\eqref{2-4}$, $\eqref{2-5}-\eqref{2-12}$ and $\eqref{2-13}-\eqref{2-16}$ are non-Belousov.

\begin{definition}
If $(B;\cdot)$ is a group, then the bijection, $\alpha:
B\rightarrow B$, is called a \textit{holomorphism} of $(B;\cdot)$ if
\[
\alpha (x\cdot y^{-1}\cdot z)=\alpha x\cdot (\alpha y)^{-1}\cdot \alpha
z,
\]
 for every $x,y,z\in B$. 
\end{definition}

Note that this concept is equivalent to the concept of quasiautomorphism of groups, by \cite{26}. The set of all holomorphisms  of $(B;\cdot)$ is denoted by
$Hol(B;\cdot)$ and it is a group under the superposition of
mappings: $(\alpha \cdot\beta)x=\beta(\alpha x)$, for every $x\in
B$.

The following properties of holomorphisms were proved for Muofang loops in \cite{10}.

\begin{lemma}\label{hol1}
Let for bijections $\alpha_1, \alpha_2, \alpha_3$ on the group
 $(B;\cdot)$, the following identity is satisfied:
\[
\alpha_1(x\cdot y)=\alpha_2(x)\cdot\alpha_3(y).
\]
Then $\alpha_1, \alpha_2,
 \alpha_3\in Hol(B;\cdot)$.
\end{lemma}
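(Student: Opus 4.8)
The plan is to reduce the three-bijection relation to a single functional identity for $\alpha_1$ alone, verify the holomorphism law for $\alpha_1$, and then transfer it to $\alpha_2$ and $\alpha_3$, which will turn out to be mere translates of $\alpha_1$. Write $e$ for the group unit.

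First I would substitute $y=e$ into $\alpha_1(x\cdot y)=\alpha_2(x)\cdot\alpha_3(y)$ to obtain $\alpha_2(x)=\alpha_1(x)\cdot\alpha_3(e)^{-1}$, and symmetrically substitute $x=e$ to obtain $\alpha_3(y)=\alpha_2(e)^{-1}\cdot\alpha_1(y)$. Feeding these back into the original relation and using $\alpha_1(e)=\alpha_2(e)\cdot\alpha_3(e)$ (the $x=y=e$ case), the two constants collapse to $\alpha_1(e)^{-1}$, yielding the deformed-homomorphism identity
\[\alpha_1(x\cdot y)=\alpha_1(x)\cdot\alpha_1(e)^{-1}\cdot\alpha_1(y).\]

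Next, from this identity I would extract the behaviour on inverses: setting $y:=x^{-1}$ gives $\alpha_1(e)=\alpha_1(x)\cdot\alpha_1(e)^{-1}\cdot\alpha_1(x^{-1})$, hence $\alpha_1(x^{-1})=\alpha_1(e)\cdot\alpha_1(x)^{-1}\cdot\alpha_1(e)$. Applying the deformed-homomorphism identity twice to $\alpha_1(x\cdot y^{-1}\cdot z)$ and substituting this expression for $\alpha_1(y^{-1})$, the factors $\alpha_1(e)^{\pm 1}$ cancel in pairs and leave exactly $\alpha_1(x)\cdot\alpha_1(y)^{-1}\cdot\alpha_1(z)$, which is the holomorphism law for $\alpha_1$.

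Finally, writing $a=\alpha_3(e)$ and $b=\alpha_2(e)$, the maps $\alpha_2(x)=\alpha_1(x)\cdot a^{-1}$ and $\alpha_3(x)=b^{-1}\cdot\alpha_1(x)$ are obtained from $\alpha_1$ by right, respectively left, multiplication by a fixed element. I would then check directly that such a translation preserves the holomorphism law: for $\alpha_2$ the inner constants $a^{-1}\cdot a$ arising from $\alpha_2(x)\cdot\alpha_2(y)^{-1}$ cancel, leaving $\alpha_1(x)\cdot\alpha_1(y)^{-1}\cdot\alpha_1(z)\cdot a^{-1}=\alpha_2(x\cdot y^{-1}\cdot z)$, and likewise for $\alpha_3$ with $b^{-1}\cdot b$. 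Since $\alpha_1,\alpha_2,\alpha_3$ are bijections by hypothesis, all three lie in $Hol(B;\cdot)$. The computations are elementary, so there is no conceptual obstacle; the one point demanding care is that $(B;\cdot)$ need not be abelian, so throughout one must preserve the order of the non-commuting factors—in particular when verifying that the translation constants $a^{\pm1}$ and $b^{\pm1}$ cancel in exactly the right positions for $\alpha_2$ and $\alpha_3$.
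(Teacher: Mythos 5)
Your proof is correct, but it cannot be "the same as the paper's" for a simple reason: the paper does not prove Lemma \ref{hol1} at all. It only remarks that this property "was proved for Muofang [Moufang] loops" and cites Movsisyan's book \cite{10}, so the result is imported from a more general theory where associativity is unavailable and the argument is necessarily more delicate. Your argument is a genuinely different, self-contained and elementary route that exploits the group setting of the statement: the normalization $\alpha_2(x)=\alpha_1(x)\cdot\alpha_3(e)^{-1}$, $\alpha_3(y)=\alpha_2(e)^{-1}\cdot\alpha_1(y)$, together with $\alpha_1(e)=\alpha_2(e)\cdot\alpha_3(e)$, collapses the hypothesis to the deformed-homomorphism identity $\alpha_1(x\cdot y)=\alpha_1(x)\cdot\alpha_1(e)^{-1}\cdot\alpha_1(y)$; the inverse formula $\alpha_1(x^{-1})=\alpha_1(e)\cdot\alpha_1(x)^{-1}\cdot\alpha_1(e)$ then makes the constants cancel in $\alpha_1(x\cdot y^{-1}\cdot z)$, and the Mal'cev-type law is stable under the one-sided translations that recover $\alpha_2$ and $\alpha_3$. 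I checked the non-abelian bookkeeping: all cancellations ($\alpha_3(e)^{-1}\alpha_2(e)^{-1}=\alpha_1(e)^{-1}$, the $a^{-1}a$ and $b^{-1}b$ collapses) occur in the correct positions. What your approach buys is transparency and independence from the literature, and as a by-product it essentially re-proves Lemma \ref{hol2} as well (the map $x\mapsto\alpha_1(x)\cdot\alpha_1(e)^{-1}$ is visibly an automorphism); what the paper's citation buys is generality, since the result in \cite{10} covers Moufang loops, where your free use of associativity and two-sided inverses would not be available.
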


\begin{lemma}\label{hol2}
Every holomorphism $\alpha$ of the group $(B;\cdot)$  has the
following form:
\[
\alpha x=\varphi x\cdot k
\]
where, $\varphi\in Aut(B;\cdot)$ and $k\in B$.
\end{lemma}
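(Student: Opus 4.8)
The plan is to split $\alpha$ into a ``translation part'' and a ``multiplicative part,'' the natural candidate for the translation being $k=\alpha e$ (where $e$ is the identity of $(B;\cdot)$) and the multiplicative part being the renormalized map $\varphi x=\alpha x\cdot k^{-1}$. Since the target form is $\alpha x=\varphi x\cdot k$ with $\varphi$ an automorphism, we must have $\alpha e=\varphi e\cdot k=k$, so $k=\alpha e$ is forced; this tells me exactly how to define $\varphi$. The whole task then reduces to showing that this particular $\varphi$ is an automorphism of $(B;\cdot)$.

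First I would specialize the defining identity of a holomorphism, namely $\alpha(x\cdot y^{-1}\cdot z)=\alpha x\cdot(\alpha y)^{-1}\cdot\alpha z$, by substituting $y=e$. This collapses the three-variable relation into the two-variable ``twisted multiplicativity''
\[
\alpha(x\cdot z)=\alpha x\cdot k^{-1}\cdot\alpha z ,
\]
which holds for all $x,z\in B$. This single substitution is the crux of the argument: it converts the quasiautomorphism condition into a statement that, after the right gauge fixing, is plain multiplicativity.

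Next I would verify directly that $\varphi x=\alpha x\cdot k^{-1}$ is multiplicative. Using the displayed relation, $\varphi(x\cdot z)=\alpha(x\cdot z)\cdot k^{-1}=\alpha x\cdot k^{-1}\cdot\alpha z\cdot k^{-1}$, while $\varphi x\cdot\varphi z=(\alpha x\cdot k^{-1})\cdot(\alpha z\cdot k^{-1})$, and these two expressions coincide; note this computation does not assume commutativity, so it is valid for an arbitrary group $(B;\cdot)$. Thus $\varphi$ is a homomorphism. Since $\alpha$ is a bijection by the definition of holomorphism and right translation by $k^{-1}$ is a bijection of $B$, the composite $\varphi$ is bijective, hence $\varphi\in Aut(B;\cdot)$. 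Rearranging $\varphi x=\alpha x\cdot k^{-1}$ gives $\alpha x=\varphi x\cdot k$, as required.

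I do not expect a genuine obstacle here; the proof is short once the normalization is chosen correctly. The only real subtlety is recognizing that $k$ must be taken as $\alpha e$ and that the renormalization must be by \emph{right} multiplication (so that the $k^{-1}$ factors cancel against the twist in the correct order), together with spotting that the $y=e$ specialization already carries all the needed information while the full three-variable identity is not required.
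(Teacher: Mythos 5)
Your proof is correct. Defining $k=\alpha e$ and $\varphi x=\alpha x\cdot k^{-1}$, the $y=e$ specialization of the holomorphism identity indeed yields $\alpha(x\cdot z)=\alpha x\cdot k^{-1}\cdot\alpha z$, and from there $\varphi(x\cdot z)=\alpha x\cdot k^{-1}\cdot\alpha z\cdot k^{-1}=\varphi x\cdot\varphi z$ by associativity alone; bijectivity of $\varphi$ follows since it is $\alpha$ composed with a right translation, so $\varphi\in Aut(B;\cdot)$ and $\alpha x=\varphi x\cdot k$. The comparison with the paper is somewhat one-sided, however: the paper gives no proof of this lemma at all. It states Lemmas \ref{hol1} and \ref{hol2} with a pointer to Movsisyan's monograph \cite{10}, noting that these properties of holomorphisms were established there in the more general setting of Moufang loops. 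Your argument is therefore a genuinely different (and more elementary) route: it is a short, self-contained verification that works precisely because $(B;\cdot)$ is a group, since the cancellation of the interleaved $k^{-1}$ factors uses full associativity, which is not available in a Moufang loop. What your approach buys is a five-line proof the reader can check directly, and it covers everything the paper actually needs, because Lemma \ref{hol2} is only ever applied to the abelian group $(B;+)$; what the cited approach buys is the extra generality of loops, which this paper does not use.
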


\begin{theorem}\label{main}
Let the set $B$ forms a quasigroup under the binary operations
$f$ and $g$. If the  pair $(f,g)$ of binary operations satisfies one of the balanced non-Belousov functional equations ($\eqref{2-2}-\eqref{2-4}$, $\eqref{2-5}-\eqref{2-12}$ or $\eqref{2-13}-\eqref{2-15}$), then there exists a binary operation '$+$' under which $B$
forms an abelian group and for arbitrary elements $x,y \in B$ we
have:
\[
f(x,y)=\varphi_1(x)+\psi_1(y)+c_1,\]
\[
g(x,y)=\varphi_2(x)+\psi_2(y)+c_2,
\]
where $c_1, c_2$ are fixed elements of $B$ and $\varphi_i, \psi_i \in Aut(B;+)$ for
$i=1,2$, such that:
\begin{description}
\item[$-$] $\varphi_1\varphi_2=\varphi_2\varphi_1$,  $\varphi_1\psi_2=\psi_2\psi_1$, $\psi_1\varphi_2=\varphi_2\psi_1$ and $\psi_1\psi_2=\psi_2\varphi_1$ for the equation $\eqref{2-2}$.
\item[$-$] $\varphi_1\varphi_2=\varphi_2\varphi_1$,  $\varphi_1\psi_2=\psi_2\varphi_1$, $\psi_1\varphi_2=\psi_2\psi_1$ and $\psi_1\psi_2=\varphi_2\psi_1$ for the equation $\eqref{2-3}$.
\item[$-$] $\varphi_1\varphi_2=\varphi_2\varphi_1$,  $\varphi_1\psi_2=\psi_2\psi_1$, $\psi_1\varphi_2=\psi_2\varphi_1$ and $\psi_1\psi_2=\varphi_2\psi_1$ for the equation $\eqref{2-4}$.
\item[$-$] $\varphi_1\varphi_2=\psi_2\varphi_1$,  $\varphi_1\psi_2=\varphi_2\varphi_1$, $\psi_1\varphi_2=\varphi_2\psi_1$ and $\psi_1\psi_2=\psi_2\psi_1$ for the equation $\eqref{2-5}$.
\item[$-$] $\varphi_1\varphi_2=\psi_2\psi_1$,  $\varphi_1\psi_2=\varphi_2\varphi_1$, $\psi_1\varphi_2=\varphi_2\psi_1$ and $\psi_1\psi_2=\psi_2\varphi_1$ for the equation $\eqref{2-6}$.
\item[$-$] $\varphi_1\varphi_2=\psi_2\varphi_1$,  $\varphi_1\psi_2=\varphi_2\varphi_1$, $\psi_1\varphi_2=\psi_2\psi_1$ and $\psi_1\psi_2=\varphi_2\psi_1$ for the equation $\eqref{2-7}$.
\item[$-$] $\varphi_1\varphi_2=\psi_2\psi_1$,  $\varphi_1\psi_2=\varphi_2\varphi_1$, $\psi_1\varphi_2=\psi_2\varphi_1$ and $\psi_1\psi_2=\varphi_2\psi_1$ for the equation $\eqref{2-8}$.
\item[$-$] $\varphi_1\varphi_2=\varphi_2\psi_1$,  $\varphi_1\psi_2=\psi_2\varphi_1$, $\psi_1\varphi_2=\varphi_2\varphi_1$ and $\psi_1\psi_2=\psi_2\psi_1$ for the equation $\eqref{2-9}$.
\item[$-$] $\varphi_1\varphi_2=\varphi_2\psi_1$,  $\varphi_1\psi_2=\psi_2\psi_1$, $\psi_1\varphi_2=\varphi_2\varphi_1$ and $\psi_1\psi_2=\psi_2\varphi_1$ for the equation $\eqref{2-10}$.
\item[$-$] $\varphi_1\varphi_2=\psi_2\varphi_1$,  $\varphi_1\psi_2=\varphi_2\psi_1$, $\psi_1\varphi_2=\varphi_2\varphi_1$ and $\psi_1\psi_2=\psi_2\psi_1$ for the equation $\eqref{2-11}$.
\item[$-$] $\varphi_1\varphi_2=\psi_2\psi_1$,  $\varphi_1\psi_2=\varphi_2\psi_1$, $\psi_1\varphi_2=\varphi_2\varphi_1$ and $\psi_1\psi_2=\psi_2\varphi_1$ for the equation $\eqref{2-12}$.
\item[$-$] $\varphi_1\varphi_2=\varphi_2\psi_1$,  $\varphi_1\psi_2=\psi_2\varphi_1$, $\psi_1\varphi_2=\psi_2\psi_1$ and $\psi_1\psi_2=\varphi_2\varphi_1$ for the equation $\eqref{2-13}$.
\item[$-$] $\varphi_1\varphi_2=\varphi_2\psi_1$,  $\varphi_1\psi_2=\psi_2\psi_1$, $\psi_1\varphi_2=\psi_2\varphi_1$ and $\psi_1\psi_2=\varphi_2\varphi_1$ for the equation $\eqref{2-14}$.
\item[$-$] $\varphi_1\varphi_2=\psi_2\varphi_1$,  $\varphi_1\psi_2=\varphi_2\psi_1$, $\psi_1\varphi_2=\psi_2\psi_1$ and $\psi_1\psi_2=\varphi_2\varphi_1$ for the equation $\eqref{2-15}$.
\end{description}
The group $(B; +)$ is unique up to isomorphisms.
\end{theorem}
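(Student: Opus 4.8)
The plan is to prove the statement in two stages that are by now standard for medial-type equations: a linearization stage, essentially uniform across all fourteen equations, followed by an equation-specific coefficient comparison that produces the four commutation relations. In the linearization stage I would first fix an element $0\in B$ and, exactly as in the proofs of Theorems \ref{medial} and \ref{param}, pass to a principal isotope of $f$, say $x+y:=f(R^{-1}x,L^{-1}y)$ with $R=R_0^f$ and $L=L_0^f$, so that $(B;+)$ becomes a loop. Specializing the given equation by setting variables to $0$ yields, for each of the fourteen arrangements, identities among the translations of $f$ and $g$ of the form $\alpha_1(a+b)=\alpha_2(a)+\alpha_3(b)$. The non-Belousov arrangement of the four variables is precisely what forces these relations to interlock so that $(B;+)$ is not merely a loop but an associative and commutative one, i.e. an abelian group (this is the heart of the Toyoda--Bruck--Belousov argument, and is where the Belousov equations \eqref{2-0}, \eqref{2-00}, \dots{} behave differently). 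Once $(B;+)$ is an abelian group, Lemma \ref{hol1} shows the relevant translations are holomorphisms and Lemma \ref{hol2} upgrades them to an automorphism composed with a translation, giving $f(x,y)=\varphi_1 x+\psi_1 y+c_1$ and $g(x,y)=\varphi_2 x+\psi_2 y+c_2$ with $\varphi_i,\psi_i\in Aut(B;+)$.

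For the coefficient comparison I would substitute these linear forms into the chosen equation. Since $(B;+)$ is abelian and the $\varphi_i,\psi_i$ are additive, each side collapses to $\Phi_1 x+\Phi_2 y+\Phi_3 u+\Phi_4 v+\text{const}$, where every $\Phi_j$ is one of the products $\varphi_i\varphi_j$, $\varphi_i\psi_j$, $\psi_i\varphi_j$, $\psi_i\psi_j$. Because $x,y,u,v$ range independently over $B$, equating the four coefficient endomorphisms on the two sides gives exactly four relations; for \eqref{2-2}, for instance, the $x,y,u,v$ coefficients yield $\varphi_1\varphi_2=\varphi_2\varphi_1$, $\varphi_1\psi_2=\psi_2\psi_1$, $\psi_1\varphi_2=\varphi_2\psi_1$ and $\psi_1\psi_2=\psi_2\varphi_1$, matching the statement, and the same bookkeeping disposes of \eqref{2-3}--\eqref{2-15}. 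Uniqueness of $(B;+)$ up to isomorphism follows as in Theorems \ref{medial} and \ref{param}, from the fact that a loop isotope of a group is determined up to isomorphism by its identity element.

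The main obstacle is the linearization stage, and specifically the passage from loop to abelian group performed uniformly for all fourteen variable arrangements: one must check that each non-Belousov arrangement supplies enough interlocking translation identities to force both associativity and commutativity of $(B;+)$ and, simultaneously, that $f$ and $g$ are linear over the \emph{same} group. Once linearity over a common abelian group is secured, the remaining work is the routine (if lengthy) coefficient bookkeeping of the second stage.
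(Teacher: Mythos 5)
Your second stage (coefficient comparison after substituting the linear forms) and your uniqueness argument agree with the paper, but your first stage contains a genuine gap, and it sits exactly where you yourself flag ``the main obstacle.'' The whole mathematical content of the theorem is the claim that a non-Belousov arrangement forces the loop isotope $x+y=f(R^{-1}x,L^{-1}y)$ to be an abelian group over which \emph{both} $f$ and $g$ are linear. You assert that the specialization identities ``interlock'' to give associativity and commutativity, calling this the heart of the Toyoda--Bruck--Belousov argument, but you never carry this argument out, not even for a single one of the fourteen equations. It is not a routine verification: setting variables to $0$ in, say, \eqref{2-2} gives relations such as $R^f_{g(0,0)}\,g(x,y)=g(R^f_0x,L^f_0y)$, i.e.\ certain translations of $f$ form an autotopy of $g$; by themselves such relations do not yield group axioms for $+$, and promoting them to associativity and commutativity requires the full four-variable strength of the equation and a substantial chain of lemmas (this is precisely the content of the Acz\'el--Belousov--Hossz\'u paper \cite{1}). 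As written, your proof defers its only hard step to an argument that is named but not given.

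The paper avoids this work by \emph{citing} the Acz\'el--Belousov--Hossz\'u theorem for the generalized bisymmetry equation $A_1(A_2(x,y),A_3(u,v))=A_4(A_5(x,u),A_6(y,v))$ on six quasigroup operations: that theorem already delivers the abelian group $(B;+)$ and explicit representations of all six operations through eight bijections and $+$. Each of the fourteen equations is then recognized as an instance of this equation after replacing some operations by their transposes $A^*_i(x,y)=A_i(y,x)$ (transposes of quasigroup operations are again quasigroup operations); for \eqref{2-2} one takes $A_1=A_5=A^*_6=f$ and $A_2=A_3=A_4=g$. The identifications among the $A_i$ produce identities of the form $\psi(x+y)=\chi(\delta^{-1}x)+\alpha(\varphi^{-1}y)$, whence Lemma \ref{hol1} shows the relevant maps are holomorphisms and Lemma \ref{hol2} writes each as an automorphism plus a constant, giving the linear forms of $f$ and $g$ over the \emph{same} group; your stage two then finishes as in the paper. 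To repair your proposal you must either actually prove the loop-to-abelian-group step for each arrangement (in effect reproving \cite{1}), or reduce to \cite{1} via transposes as the paper does; without one of these, the proposal presupposes the very linearization it is supposed to establish.
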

\begin{proof}
Let $(B; f, g)$ be an algebra with the  pair $(f, g)$ of binary quasigroup operations satisfies the non-Belousov functional equation $\eqref{2-2}$. We use the main result of \cite{1}. Let
the set $B$ forms a quasigroup under six operations $A_i(x,y)$
(for $i=1,\dots,6$). If these operations satisfy the following equation:
\begin{equation}\label{t1}
A_1(A_2(x,y),A_3(u,v))=A_4(A_5(x,u),A_6(y,v)),
\end{equation}
for all elements $x$, $y$, $u$ and $v$ of the set $B$ then there exists an
operation '$+$' under which $B$ forms an abelian group isotopic to all these six quasigroups. And there exists eight
one-to-one mappings; $\alpha$, $\beta$, $\gamma$, $\delta$, $\epsilon$,
$\psi$, $\varphi$ and $\chi$ of $B$ onto itself such that:
\begin{eqnarray*}
&&A_1(x,y)=\delta x+\varphi y,\\
&&A_2(x,y)=\delta^{-1}(\alpha x+\beta y),\\
&&A_3(x,y)=\varphi^{-1}(\chi x+\gamma y),\\
&&A_4(x,y)=\psi x+\epsilon y,\\
&&A_5(x,y)=\psi^{-1}(\alpha x+\chi y),\\
&&A_6(x,y)=\epsilon^{-1}(\beta x+\gamma y).
\end{eqnarray*}
Now, let $A^*_i(x,y)=A_i(y,x)$ then, using this equality in
$\eqref{t1}$ we have:
\begin{equation}\label{t2}
A_1(A_2(x,y),A_3(u,v))=A_4(A_5(x,u),A^*_6(y,v)),
\end{equation} and so,
\[
A_6^*(x,y)=A_6(y,x)=\epsilon^{-1}(\beta y+\gamma
x)=
\epsilon^{-1}(\gamma x+\beta y),
\]
as $(B; +)$ is an abelian group.
So, let \[A_1=A_5=A_6^*=f,\]
  \[A_2=A_3=A_4=g.\] 
With these
assumptions, from the
equation $\eqref{t2}$ we can reach  to the equation $\eqref{2-2}$. Since $A_1=A_5$, we have:
\begin{eqnarray*}
&&\delta x+\varphi y=\psi^{-1}(\chi x+\alpha y)\\
&&\Rightarrow \psi(\delta x+\varphi y)=\chi x+\alpha y \\
&&\Rightarrow \psi( x+ y)=\chi (\delta^{-1} x)+\alpha
(\varphi^{-1}y)\\
&&\Rightarrow \psi\in Hol(B;+)
\end{eqnarray*}
by, Lemma $\ref{hol1}$.
Similarly, since $A_1=A_6^*$, we have: $\epsilon\in Hol(A; +)$.
Therefore, by Lemma $\ref{hol2}$ there exist $\varphi_2, \psi_2\in
Aut(B; +)$ such that:
\begin{eqnarray*}
 &&\psi
x=\varphi_2x+a,\\
&&\epsilon x=b+\psi_2x,
\end{eqnarray*}
where $a, b$ are fixed elements in $B$. Hence,
\begin{eqnarray*}
&&g(x,y)=A_4(x,y)=\psi x+\epsilon y=\\
&&\varphi_2x+a+b+\psi_2x=\varphi_2x+c_2+\psi_2x,
\end{eqnarray*}
where $c_2=a+b$ is a fixed element in $B$.
By the same manner, we can show that: $\delta,\varphi\in
Hol(B;+)$, since $A_2=A_4^*$ and $A_3=A_4^*$. So,  there exist $\varphi_1,\psi_1 \in Aut(B;+)$ such that:
\begin{eqnarray*}
&&\delta x=\varphi_1+d,\\
&&\varphi x=e+\psi_1,
\end{eqnarray*}
where $d$ and $e$ are fixed elements in $B$. Hence,
\[f(x,y)=A_1(x,y)=\delta x+\varphi y=\varphi_1x+c_1+\psi_1y,\]
where $c_1=d+e$ is a fixed element in $B$.
Now, put:
\begin{eqnarray*}
f(x,y)=\varphi_1(x)+\psi_1(y)+c_1,\\
g(x,y)=\varphi_2(x)+\psi_2(y)+c_2,
\end{eqnarray*}
in the equation $\eqref{2-2}$ then, it is easy to check that $\varphi_1\varphi_2=\varphi_2\varphi_1$,  $\varphi_1\psi_2=\psi_2\psi_1$, $\psi_1\varphi_2=\varphi_2\psi_1$ and $\psi_1\psi_2=\psi_2\varphi_1$.
The uniqueness of the group $(B;+)$ follows from the Albert
theorem: if  two groups are isotopic, then
they are isomorphic, by \cite{7}.

The proof is similar for the rest of  non-Belousov functional equations ($\eqref{2-3}$, $\eqref{2-4}$, $\eqref{2-5}-\eqref{2-12}$ and $\eqref{2-13}-\eqref{2-15}$).
\end{proof}

\begin{corollary}
Let $(B; F)$ be a binary  algebra with quasigroup
operations which satisfies one of the  non-Belousov functional equations ($\eqref{2-2}-\eqref{2-4}$, $\eqref{2-5}-\eqref{2-12}$ or $\eqref{2-13}-\eqref{2-15}$)  then there exists an abelian group $(B;+)$ such that
every operation $f_i\in F$ is represented by the following rule:
\[
f_i(x,y)=\varphi_i(x)+c_i+\varphi_i(y),
\]
where $c_i\in B$ and $\varphi_i\in Aut(B;+)$ such that $\varphi_i\varphi_j=\varphi_j\varphi_i$, for every $1 \leqslant i,j \leqslant\vert F\vert$.
The group $(B;+)$ is unique up to isomorphisms.
\end{corollary}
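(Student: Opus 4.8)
The plan is to bootstrap the single-pair result of Theorem \ref{main} up to an arbitrary family $F$ by exploiting the diagonal case $f_i=f_j$ together with the uniqueness of the underlying group. First I would fix one operation $f_1\in F$ and feed the pair $(f_1,f_1)$ into the hypothesis. Setting $f=g=f_1$ collapses each two-operation equation \eqref{2-2}--\eqref{2-15} into the corresponding single-operation equation \eqref{1-2}--\eqref{1-15}; for instance $\eqref{2-2}$ becomes $f_1(f_1(x,y),f_1(u,v))=f_1(f_1(x,u),f_1(v,y))$, which is exactly $\eqref{1-2}$. By Theorem \ref{thkrap2007} these are precisely commutative (para)mediality, so $f_1$ is a commutative $T$-quasigroup: there is an abelian group $(B;+)$ with $f_1(x,y)=\varphi_1(x)+c_1+\varphi_1(y)$, the same automorphism appearing on both sides. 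This fixes, once and for all, the group $(B;+)$ that will serve the whole family.

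Next, for each $f_i\in F$, I would apply Theorem \ref{main} to the pair $(f_1,f_i)$. This furnishes an abelian group over which both $f_1$ and $f_i$ are linear. Since that group also renders $f_1$ linear, and a $T$-quasigroup determines its group up to isomorphism (Albert's theorem, already invoked in the proof of Theorem \ref{main}), it is isomorphic to the group $(B;+)$ fixed above; transporting the representation of $f_i$ along this isomorphism yields $f_i(x,y)=\varphi_i(x)+c_i+\psi_i(y)$ over the common group $(B;+)$. Applying the diagonal argument of the first step to the pair $(f_i,f_i)$ then forces $\psi_i=\varphi_i$, so $f_i(x,y)=\varphi_i(x)+c_i+\varphi_i(y)$, as required.

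It remains to establish the commuting relations. For any pair $i,j$ I would run Theorem \ref{main} on $(f_i,f_j)$ and substitute $\psi_i=\varphi_i$ and $\psi_j=\varphi_j$ into whichever of its four relational identities the relevant equation supplies; a short inspection of the table in Theorem \ref{main} shows that, in every listed case, all four identities degenerate to the single relation $\varphi_i\varphi_j=\varphi_j\varphi_i$. The uniqueness of $(B;+)$ up to isomorphism is then inherited verbatim from Albert's theorem.

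The hard part will be reconciling the groups produced by the different pairwise applications of Theorem \ref{main}: each pair $(f_1,f_i)$ a priori supplies its own abelian group, and the real content of the argument is that these can all be identified with the single group $(B;+)$. The decisive point is that the representation of the common operation $f_1$ pins the group down up to isomorphism, which is exactly where the uniqueness clause of Theorem \ref{main}, and Albert's theorem behind it, does the work; once a common group is secured, the remaining verifications reduce to the routine substitutions indicated above.
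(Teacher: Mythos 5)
Your proposal follows essentially the same route as the paper's own proof: fix a reference operation, use the diagonal pair $(f_1,f_1)$ together with Theorem \ref{thkrap2007} to obtain the abelian group, apply Theorem \ref{main} to the pairs $(f_1,f_i)$ and identify the resulting groups via Albert's theorem, force $\psi_i=\varphi_i$ by a diagonal argument, and extract $\varphi_i\varphi_j=\varphi_j\varphi_i$ at the end; even your ``hard part'' (reconciling the pairwise groups) is treated at the same level of brevity in the paper, which invokes transitivity of isotopy there. The only cosmetic differences are that the paper deduces $\psi_i=\varphi_i$ from the $i=j$ instance of the four automorphism relations ($\psi_i^2=\psi_i\varphi_i$, then cancel) rather than from commutativity of $f_i$, and it obtains the commutation relation by substituting the final linear forms directly into equation \eqref{2-2} rather than by specializing the relations of Theorem \ref{main}.
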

\begin{proof}
Let $(B; F)$ satisfies the non-Belousov functional equation $\eqref{2-2}$.
If $f_0\in F$ is a fixed operation then, by Theorem $\ref{thkrap2007}$, $f$ is
principally isotopic to the abelian group operation '$+$' on $B$.
Now, if $f_i\in F$ is any operation, then the pair of binary
operations $(f_0,f_i)$ satisfies the  non-Belousov functional equation $\eqref{2-2}$. Hence, $f_0$ and $f_i$ are
principally isotopic to another abelian group operation '$*$'
on $B$. Thus, by transitivity of isotopy, any operation $f_i$ is
principally isotopic to the same abelian group operation '$+$'.
Hence, according to the proof of previous theorem we have:
\[f_i(x,y)=\varphi_i(x)+\psi_i(y)+c_i,\]
where $c_i\in B$ and $\varphi_i,\psi_i\in Aut(B;+)$ such that $\varphi_i\varphi_j=\varphi_j\varphi_i$,  $\varphi_i\psi_j=\psi_j\psi_i$, $\psi_i\varphi_j=\varphi_j\psi_i$ and $\psi_i\psi_j=\psi_j\varphi_i$, for every $1 \leqslant i,j \leqslant\vert F\vert$. If $i=j$ then \[\psi_i^2=\psi_i\varphi_i \Rightarrow \psi_i=\varphi_i.\]
So, 
\[
f_i(x,y)=\varphi_i(x)+c_i+\varphi_i(y),
\]
for $1 \leqslant i \leqslant\vert F\vert$. By putting
\[
f_j(x,y)=\varphi_j(x)+c_j+\varphi_j(y),
\]\[
f_i(x,y)=\varphi_i(x)+c_i+\varphi_i(y),
\]
in $\eqref{2-2}$ we can obtain $\varphi_i\varphi_j=\varphi_j\varphi_i$, for every $1 \leqslant i,j \leqslant\vert F\vert$. 
The proof is similar for the rest of  non-Belousov functional equations $\eqref{2-3}$, $\eqref{2-4}$, $\eqref{2-5}-\eqref{2-12}$ and $\eqref{2-13}-\eqref{2-15}$.
\end{proof}

The Theorems $\ref{medial}-\ref{main}$ enable us to give a short proof of Theorems $\ref{quasimedial}$, $\ref{quasipara}$ and $\ref{thkrap2007}$:

\begin{corollary}
Every quasigroup $(B; f)$ satisfying one of the balanced non-Belousov functional equations $\eqref{1-1}-\eqref{1-4}$, $\eqref{1-5}-\eqref{1-12}$ or $\eqref{1-13}-\eqref{1-16}$ has a linear representation on an abelian group $(B; +)$:
\[f(x,y)=\varphi x+c+\psi y,\]
where $c\in B$ is fixed element and $\varphi$ and $\psi$ are automorphisms on the abelian group $(B; +)$ such that:
\begin{description}
\item[$-$] $\varphi\psi=\psi\varphi$ for the equation $\eqref{1-1}$.
\item[$-$] $\varphi=\psi$ for the equations $\eqref{1-2}-\eqref{1-4}$, $\eqref{1-5}-\eqref{1-12}$ and $\eqref{1-13}-\eqref{1-15}$.
\item[$-$] $\varphi^2=\psi^2$ for the equation $\eqref{1-16}$.
\end{description}
\end{corollary}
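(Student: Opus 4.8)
The plan is to recognize each single-operation equation $\eqref{1-k}$ as the diagonal specialization $g=f$ of the corresponding two-operation equation $\eqref{2-k}$, and then to invoke the already-established structure theorems for pairs. Concretely, given a quasigroup $(B;f)$ satisfying one of $\eqref{1-1}$--$\eqref{1-16}$, I would form the pair $(f,f)$ and observe that substituting $g=f$ into $\eqref{2-k}$ returns exactly $\eqref{1-k}$; hence $(f,f)$ satisfies the matching pair equation $\eqref{2-k}$. This is the mechanism hinted at by the remark that Theorems \ref{medial}--\ref{main} give short proofs of Theorems \ref{quasimedial}, \ref{quasipara} and \ref{thkrap2007}.

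For the medial case $\eqref{1-1}$ I would apply Theorem \ref{medial} to $(f,f)$, for the paramedial case $\eqref{1-16}$ Theorem \ref{param}, and for the remaining equations $\eqref{1-2}$--$\eqref{1-15}$ Theorem \ref{main}. Each yields an abelian group $(B;+)$ together with representations
\[
f(x,y)=\varphi_1(x)+\psi_1(y)+c_1,\qquad f(x,y)=\varphi_2(x)+\psi_2(y)+c_2,
\]
for the first and second slot respectively. Since both expressions represent the same operation $f$, comparing them (fix $y$ and vary $x$, using that automorphisms fix $0$) forces $\varphi_1=\varphi_2=:\varphi$, $\psi_1=\psi_2=:\psi$ and $c_1=c_2=:c$. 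This already gives the linear form $f(x,y)=\varphi x+c+\psi y$ with $\varphi,\psi\in Aut(B;+)$, and uniqueness of $(B;+)$ is inherited from the cited theorems via Albert's theorem.

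It then remains to read off the commutativity constraint in each case by substituting $\varphi_1=\varphi_2=\varphi$ and $\psi_1=\psi_2=\psi$ into the relations supplied by the relevant theorem. For $\eqref{1-1}$ the relation $\varphi_1\psi_2=\psi_2\varphi_1$ of Theorem \ref{medial} collapses to $\varphi\psi=\psi\varphi$; for $\eqref{1-16}$ the relation $\varphi_1\varphi_2=\psi_2\psi_1$ of Theorem \ref{param} collapses to $\varphi^2=\psi^2$. For each of $\eqref{1-2}$--$\eqref{1-15}$ one of the four relations of Theorem \ref{main} becomes, after this substitution, an identity of the form $\varphi\psi=\psi^2$, $\psi\varphi=\psi^2$, or $\varphi^2=\psi\varphi$; cancelling the common automorphism on the appropriate side (permissible since automorphisms are bijections) immediately yields $\varphi=\psi$. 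This is exactly the cancellation $\psi_i^2=\psi_i\varphi_i\Rightarrow\psi_i=\varphi_i$ already used in the preceding corollary.

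I do not expect a genuine obstacle: the result is a diagonal corollary of the pair theorems, and the only real work is the bookkeeping verifying, case by case for $\eqref{1-2}$--$\eqref{1-15}$, that at least one of the four relations degenerates to a cancellable identity forcing $\varphi=\psi$. The mildly delicate point to state cleanly is the uniqueness-of-representation step that identifies the two slot-representations of the single operation $f$; once that is in place, every commutativity condition follows by routine substitution.
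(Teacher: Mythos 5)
Your proposal is correct and takes essentially the same route as the paper: the paper's own proof consists precisely of setting $g=f$ and invoking Theorem~\ref{medial} for \eqref{1-1}, Theorem~\ref{main} for \eqref{1-2}--\eqref{1-15}, and Theorem~\ref{param} for \eqref{1-16}. The only difference is that you make explicit the two steps the paper dismisses as obvious, namely identifying the two slot-representations of the single operation $f$ (forcing $\varphi_1=\varphi_2$, $\psi_1=\psi_2$, $c_1=c_2$) and the cancellation of a bijective automorphism in a degenerate relation such as $\varphi\psi=\psi\psi$ to get $\varphi=\psi$, which is the same cancellation the paper uses in its preceding corollary.
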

\begin{proof}
If $(B; f)$ satisfy the equation $\eqref{1-1}$ then by putting $f=g$ and using the Theorem $\ref{medial}$, the proof is obvious.
If $(B; f)$ satisfy one of the equations $\eqref{1-2}-\eqref{1-4}$, $\eqref{1-5}-\eqref{1-12}$ or $\eqref{1-13}-\eqref{1-15}$, then by  putting $f=g$ and using the Theorem $\ref{main}$, the proof is obvious. 
If $(B; f)$ satisfy the equation $\eqref{1-16}$ then by putting $f=g$ and using the Theorem $\ref{param}$, the proof is obvious.
\end{proof}


{The first author acknowledges the Mahshahr Branch,  Islamic Azad University for financial assistance
of the research project no. $12345$. The second author is supported by the Ministry
of Education, Science and Technological
Development of Serbia through projects
ON $174008$ and ON $174026$.}


\end{document}